\newcommand{\sig}{\sigma}
\newcommand{\vphi}{\varphi}
\newcommand{\pref}[1]{\prettyref{#1}}
\newcommand{\Cal}[1]{\mathcal{#1}}
\newcommand{\bb}[1]{\mathbb{#1}}
\newcommand{\mfrak}[1]{\mathfrak{#1}}
\newcommand{\rcomp}{\backslash}
\newcommand{\h}{\textup{H}}
\newcommand{\ang}[1]{\langle #1 \rangle}
\newcommand{\paren}[1]{\left( #1 \right)}
\newcommand{\abs}[1]{\left| #1 \right|}
\newcommand{\Aut}{\textup{Aut}}
\newcommand{\Gal}{\textup{Gal}}
\newcommand{\bbp}{\mathbb{P}}
\newcommand{\Lie}{\mathrm{Lie}}
\newtheorem{thm}{Theorem}
\newtheorem{lma}[thm]{Lemma}
\newtheorem{cor}[thm]{Corollary}
\newtheorem{prop}[thm]{Proposition}
\newtheorem{rmk}[thm]{Remark}
\theoremstyle{definition}
\begin{document}
\title[Rank gain of Jacobian varieties]{Rank gain of Jacobian varieties over finite Galois extensions}
\author{Bo-Hae Im \and Erik Wallace}
\address{
  Department of Mathematical Sciences, KAIST, 
  291 Daehak-ro, Yuseong-gu, 
  Daejeon, 34141, 
  South Korea}
\email{bhim@kaist.ac.kr}
\address{
Department of Mathematics,
University of Connecticut,
341 Mansfield Road U1009,
Storrs, Connecticut 06269-1009,
United States}

\email{erik.wallace@uconn.edu}
\date{\today}
\subjclass[2010]{Primary 14H40, 11G05 Secondary 12E25, 14H30}
\thanks{ Bo-Hae Im was supported by Basic Science Research Program through the National Research Foundation of Korea (NRF) funded
        by the Ministry of Education, Science and Technology (NRF-2014R1A1A2053748).}

\begin{abstract}
Let $K$ be a number field, and let $\Cal{X}\to\bb{P}^1_K$
be a degree $p$-covering branched only at 0, 1, and $\infty$. If $K$ is a field containing
a primitive $p$-th root of unity then the covering of $\bb{P}^1$ is Galois over $K$, and if
$p$ is congruent to $1 \mod 6$, then there is an automorphism $\sig$ of $\Cal{X}$ which cyclically
permutes the branch points. Under these assumptions, we show that the Jacobian varieties of both $\Cal{X}$ and $\Cal{X}/\ang{\sig}$
gain rank over infinitely many linearly disjoint cyclic degree $p$-extensions of $K$.  
We also show the existence of an infinite family of elliptic
curves whose $j$-invariants are parametrized by a modular function on $\Gamma_0(3)$ and that gain rank over infinitely many cyclic degree $3$-extensions of $\bb{Q}$.
\end{abstract}
\maketitle

\section{Introduction}
The construction in this paper is inspired largely by
a paper of Elkies on the Klein quartic \cite{Elkies}, however
we have been able to prove much more general results.  The general
setup is the following. Let $\Cal{X}$ and $\Cal{Y}$ be curves defined
over a number field $K$, and suppose we have the following diagram
\begin{equation}\label{dia:1}
\xymatrix{
&\Cal{X}\ar[dl]\ar[dr]&\\
\Cal{Y}&&\bb{P}^1_K
}
\end{equation}
where both maps are defined over $K$ and surjective, and the map
$\Cal{X}\to\bb{P}^1_K$ has degree $d>1$.
The strategy is to lift $K$-rational points $P$ of $\bb{P}^1_K$
to points $Q$ on $\Cal{X}$, which in general will lie in an extension $L/K$.
By Hilbert's irreducibility theorem it can be shown that $L/K$ will usually be a degree $d$
extension.

We are especially interested in the case where $L/K$ is a Galois extension.
If the map $\Cal{X}\to\bb{P}^1_K$ is itself Galois (over $K$) with group $G$,
then the degree $d$ extensions $L/K$ that are obtained from Hilbert's irreducibility theorem
are also Galois with group isomorphic to $G$.  But there are sometimes other ways
of getting the extensions $L/K$ to be Galois without $\Cal{X}\to\bb{P}^1_K$
being Galois itself as will be seen in the proof of Theorem~\ref{thm2}.

As for the maps $\Cal{X}\to \Cal{Y}$, such maps can be obtained by taking quotients by
a subgroup of the automorphism group of $\Cal{X}$ or, in the case where $\Cal{Y}$
is an elliptic curve and $\Cal{X}$ is a suitable modular curve, they can be obtained
from the modularity theorem. In a sense the case of the Klein quartic constitutes an
example of both.

Once infinitely many points on $\Cal{X}$ and $\Cal{Y}$ have been produced,
all lying in different degree $d$ extensions of $K$, they can then be used to construct
corresponding points on the Jacobian varieties of $\Cal{X}$ and $\Cal{Y}$. Then by the generalization
of a lemma of Silverman \cite{Silverman}, it can be shown that the Jacobian varieties of $\Cal{X}$ and
$\Cal{Y}$ each gain rank over infinitely many extensions $L/K$. If we are not concerned
with the extensions $L/K$ being Galois, it is a relatively easy matter to show that the rank
gains over infinitely many extensions $L/K$ for infinitely many different degrees as indicated by
Proposition~\ref{thm4}, but with the Galois condition added the problem becomes more difficult
and essentially is related to the inverse Galois problem. In particular for elliptic curves
it is not well understood whether an elliptic curve can gain rank over cyclic Galois extensions
of a number field $K$, outside of some special cases. As the following theorem shows, the Klein
quartic gives us rank gain for certain Elliptic curves over cyclic degree $7$ extensions of $\bb{Q}(\zeta_7$)
where $\zeta_7$ is a primitive 7-th root of unity.

\begin{thm}\label{thm1}
Let $p$ be a prime. Let $\Cal{X}$ be a Riemann surface of genus $g>1$ and let
\[
 \vphi: \Cal{X}\to \bb{P}^1_K
\]
be a degree $p$ Galois covering ramified only at 0, 1, and $\infty$.
If $p\equiv 1\mod 6$, then there is an automorphism $\sig$ of $\Cal{X}$
that cyclically permutes the points 0, 1, and $\infty$, and
there are infinitely many linearly disjoint degree $p$-extensions $L/K$
over which the Jacobian variety of $\Cal{Y}=\Cal{X}/\ang{\sig}$ gains rank.
\end{thm}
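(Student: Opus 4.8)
The plan is to realize $\Cal{Y}$ as a positive-genus curve carrying a degree $p$ map to a line, to lift rational points along that map so as to produce points on $\Jac(\Cal{Y})$ defined over degree $p$ fields, and then to invoke the generalization of Silverman's lemma to turn a height-growth estimate into genuine rank gain. First I would construct $\sig$ and record the geometry. The cyclic cover $\vphi$ is determined by its monodromy $(g_0,g_1,g_\infty)$ around $0,1,\infty$, three nontrivial elements of $\bb{Z}/p\bb{Z}$ summing to $0$. The order-$3$ M\"obius map $\tau(z)=1/(1-z)$ cyclically permutes the branch points, and it lifts to an automorphism of $\Cal{X}$ exactly when there is $c\in(\bb{Z}/p\bb{Z})^{\times}$ of order $3$ with $cg_0=g_1$ and $cg_1=g_\infty$; this is possible precisely because $3\mid p-1$, which for an odd prime is the hypothesis $p\equiv 1\bmod 6$. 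Choosing a lift of order $3$ gives $\sig$, the group generated by $\sig$ and the deck group is $\bb{Z}/p\bb{Z}\rtimes\bb{Z}/3\bb{Z}$, and passing to quotients yields a degree $p$ map $\psi\colon\Cal{Y}=\Cal{X}/\ang{\sig}\to\bb{P}^1_t:=\bb{P}^1/\ang{\tau}$. A Riemann--Hurwitz computation is the first essential step: total ramification of $\vphi$ over $0,1,\infty$ forces $g(\Cal{X})=(p-1)/2$, while $\sig$ has exactly two fixed points (one over each fixed point of $\tau$, since an affine map $x\mapsto cx+b$ on a $\bb{Z}/p\bb{Z}$-torsor has a unique fixed point when $c\neq 1$). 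Applying Riemann--Hurwitz to the degree $3$ map $\Cal{X}\to\Cal{Y}$ then gives $g(\Cal{Y})=(p-1)/6\ge 1$, so $\Jac(\Cal{Y})$ is a nonzero abelian variety over $K$ (the case $g(\Cal{Y})=1$ at $p=7$ recovering the genus-one situation of the Klein quartic).

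Next I would produce the points. Applying Hilbert's irreducibility theorem to the degree $p$ cover $\psi$, there are infinitely many $t_0\in\bb{P}^1_t(K)$, with $h(t_0)\to\infty$, whose fibre is a single closed point of degree $p$; let $L_i/K$ be the corresponding residue field and $Q_i\in\Cal{Y}(L_i)$ the lifted point. Because $p$ is prime, any two distinct such fields are automatically linearly disjoint, since their intersection has degree dividing $p$ and hence equals $K$; and as a fixed number field contains only finitely many of the $Q_i$ of bounded height, infinitely many distinct, pairwise linearly disjoint $L_i$ occur. To pass to the Jacobian I set
\[
D_i=p\,[Q_i]-\psi^{*}[t_0]\in\Jac(\Cal{Y})(L_i),
\]
a degree-$0$ class defined over $L_i$ because $\psi^{*}[t_0]$ is $K$-rational. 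The key non-degeneracy input is a trace computation: as the conjugates of $Q_i$ satisfy $\sum_{j}[Q_i^{(j)}]=\psi^{*}[t_0]$, one gets $\tr_{L_i/K}(D_i)=p\,\psi^{*}[t_0]-p\,\psi^{*}[t_0]=0$, so each $D_i$ is a trace-zero class.

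Finally I would deduce rank gain. If some multiple $mD_i$ lay in $\Jac(\Cal{Y})(K)$, then $0=\tr_{L_i/K}(mD_i)=p\cdot mD_i$ would force $D_i$ to be torsion; but the canonical height $\hat h(D_i)$ grows with $h(t_0)$, since the Abel--Jacobi image of $Q_i$ has height comparable to $h(Q_i)\asymp p^{-1}h(t_0)$, so $D_i$ is non-torsion for all large $i$. Hence for infinitely many $i$ no multiple of $D_i$ lies in $\Jac(\Cal{Y})(K)$, so $D_i$ contributes a new class and $\operatorname{rank}\Jac(\Cal{Y})(L_i)>\operatorname{rank}\Jac(\Cal{Y})(K)$; this last implication is exactly what the generalized Silverman lemma provides, upgrading the height bound to rank gain over infinitely many of the linearly disjoint $L_i$.

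The main obstacle I expect lies at the interface between the geometric and arithmetic steps. Geometrically one must be sure $\Jac(\Cal{Y})\neq 0$, which is why the genus identity $g(\Cal{Y})=(p-1)/6$ is doing real work. Arithmetically, and more delicately, one must rule out the constructed points collapsing into torsion or into $\Jac(\Cal{Y})(K)\otimes\bb{Q}$. Because $\psi$ is \emph{not} Galois, the clean character decomposition of $\Jac(\Cal{Y})(L_i)\otimes\bb{Q}$ under $\Gal(L_i/K)$ is unavailable, so the trace-zero computation together with the growth of the canonical height must play its role; checking that these hypotheses genuinely meet the requirements of the generalized Silverman lemma is the crux of the argument.
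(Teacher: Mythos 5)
Your geometric half is correct and is essentially the paper's argument in different clothing: you construct $\sig$ by lifting the order-$3$ M\"obius map via the monodromy condition $cg_0=g_1$, $cg_1=g_\infty$ with $c$ of order $3$ (the paper does the same computation with winding numbers and an explicit auxiliary curve $X^mY+Y^mZ+Z^mX=0$), and both routes land on the same ramification data (two fixed points of $\sig$) and the genus formula $g(\Cal{Y})=(p-1)/6$. Your rank-gain mechanism (trace-zero classes plus growth of the canonical height) is also a legitimate alternative to the paper's, which instead uses the uniform finiteness of torsion over all extensions of degree $\leq p$ (its Lemma~\ref{lma3}) together with a Galois-equivariance argument. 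However, your arithmetic half has two genuine gaps.

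First, linear disjointness. Your fields $L_i$ are residue fields of the \emph{non-Galois} degree $p$ cover $\psi:\Cal{Y}\to\bb{P}^1_t$ (its Galois closure has group $\bb{Z}/p\bb{Z}\rtimes\bb{Z}/3\bb{Z}$), and for non-Galois extensions trivial intersection does \emph{not} imply linear disjointness: $\bb{Q}(\sqrt[3]{2})$ and $\bb{Q}(\omega\sqrt[3]{2})$ intersect in $\bb{Q}$ yet their compositum has degree $6$, not $9$. Your distinctness argument also fails: a fixed number field contains points of arbitrarily large height, and in the most important case $p=7$ one has $g(\Cal{Y})=1$, so $\Cal{Y}(L)$ can be infinite and a priori all your lifted points could land in a single field $L$. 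Both defects are repaired the way the paper's Lemma~\ref{lma1} does it: choose each new $t_0$ by Hilbert irreducibility applied over the compositum of all previously constructed fields, which forces each new field to be linearly disjoint from that compositum. Second, cyclicity. The hypothesis that $\vphi$ is Galois \emph{over $K$} (equivalently $\zeta_p\in K$) is in the theorem precisely so that the extensions produced are cyclic of degree $p$, as the abstract asserts; the paper gets this by applying Hilbert irreducibility to the Galois cover $\vphi$ itself, obtaining cyclic extensions $L_i$ with points on $\Cal{X}$, and then pushing those points down to $\Cal{Y}=\Cal{X}/\ang{\sig}$ (such an image point generates $L_i$ over $K$, since $p$ is prime and the fibers of $\Cal{X}\to\Cal{Y}$ have size $3<p$). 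Your route through $\psi$ never uses the Galois structure over $K$ and yields non-Galois extensions, so it can at best prove the literal ``degree $p$-extensions'' wording, not the cyclic statement the paper intends and proves.
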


Note that for the covering $\vphi$ to be Galois over $K$, it is required to have $K$ to contain $\bb{Q}(\zeta_p)$
where $\zeta_p$ is a primitive $p$-th root of unity. Theorem~\ref{thm1} also has the following Corollary,
which is an immediate consequence of the linear disjointness property.
\begin{cor} Following the same notations as in Theorem~\ref{thm1},
let $J_\Cal{X}$ and $J_\Cal{Y}$ be the Jacobian variety of $\Cal{X}$ and $\Cal{Y}$ respectively. Then
$J_\Cal{X}$ and $J_\Cal{Y}$ have infinite rank over  $K[p]^{ab}$, i.e. the maximal abelian extension $K[p]^{ab}$ of $K$
in the compositum $K[p]$ of all degree $p$-extensions.
\end{cor}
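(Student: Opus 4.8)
The plan is to run a standard linear-disjointness and trace argument that converts the countably many independent rank gains supplied by Theorem~\ref{thm1} into infinite rank over their compositum, and then to observe that this compositum is contained in $K[p]^{ab}$. Write $A$ for either $J_\Cal{X}$ or $J_\Cal{Y}$. First I would record that the quotient map $\Cal{X}\to\Cal{Y}=\Cal{X}/\ang{\sig}$ exhibits $J_\Cal{Y}$ as an isogeny factor of $J_\Cal{X}$ (via $f^*\colon J_\Cal{Y}\to J_\Cal{X}$, with $f_*f^*$ multiplication by $\deg f$), so the rank gain of $J_\Cal{Y}$ furnished by Theorem~\ref{thm1} over each of the linearly disjoint degree $p$-extensions $L_1,L_2,\dots$ forces a rank gain of $J_\Cal{X}$ over the same fields. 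Thus $A(L_i)$ has strictly larger rank than $A(K)$ for every $i$ and for each choice of $A$.

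Next, since each $L_i/K$ has prime degree $p$ it is cyclic, hence abelian, so every $L_i$ — and therefore the full compositum $M=\prod_i L_i$ — lies inside $K[p]^{ab}$. It then suffices to show $\mathrm{rank}\,A(M)=\infty$, for which I would bound the rank of each finite subcompositum $M_n=L_1\cdots L_n$ from below by $n$. Linear disjointness gives $\Gal(M_n/K)\cong\prod_{i=1}^n\Gal(L_i/K)$, and I set $H_i:=\Gal(M_n/L_i)=\prod_{j\neq i}\Gal(L_j/K)$.

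The heart of the argument is the production of $n$ independent points. For each $i$ choose $R_i\in A(L_i)$ whose image in $\paren{A(L_i)/A(K)}\otimes\bb{Q}$ is nonzero, which exists precisely because $A$ gains rank over $L_i$; replacing $R_i$ by $pR_i-\tr_{L_i/K}(R_i)$ I may assume $\tr_{L_i/K}(R_i)=0$ without disturbing the nonvanishing of that image. I then apply the projector $\pi_i=\frac{1}{|H_i|}\sum_{h\in H_i}h$ on $A(M_n)\otimes\bb{Q}$: it fixes $R_i$, which is $L_i$-rational, and it annihilates every $R_j$ with $j\neq i$, since averaging $R_j$ over the factor $\Gal(L_j/K)\subseteq H_i$ already yields $\frac1p\tr_{L_j/K}(R_j)=0$. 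Consequently, from any relation $\sum_i a_i R_i=(\text{torsion})$ in $A(M_n)\otimes\bb{Q}$ one extracts $a_iR_i=0$ for each $i$, whence $a_i=0$; the $R_1,\dots,R_n$ are $\bb{Q}$-linearly independent, so $\mathrm{rank}\,A(M_n)\geq n$. Letting $n\to\infty$ yields $\mathrm{rank}\,A(K[p]^{ab})\geq\mathrm{rank}\,A(M)=\infty$ for both $A=J_\Cal{X}$ and $A=J_\Cal{Y}$.

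I expect the only delicate point to be the normalization step: one must check that subtracting the trace does not kill the new rank contribution, that is, that the trace-zero representative remains nontorsion — this is exactly the content of $R_i$ having nonzero image in $\paren{A(L_i)/A(K)}\otimes\bb{Q}$ — and that the torsion on the right-hand side of the relation becomes harmless after tensoring with $\bb{Q}$. Everything else is forced by the linear disjointness already established in Theorem~\ref{thm1}, which is why the statement is genuinely an immediate corollary.
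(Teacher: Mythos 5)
Your proof is correct in substance, but it is not the paper's argument, and one of your justifications needs repair. The paper gives no written proof at all: the corollary is called immediate because the points $D_i\in J(\Cal{X})(L_i)$ manufactured in the proof of Proposition~\ref{thm3} (the engine behind Theorem~\ref{thm1}) were already shown there to be linearly independent over the compositum $L_1L_2\cdots L_k$ --- via the choice of $\sigma_i\in\Gal(L_1\cdots L_k/K)$ fixing each $D_j$ for $j\neq i$ and moving $D_i$ by a non-torsion element --- so those same points exhibit infinite rank over the compositum of all the $L_i$, which is abelian over $K$ and hence lies in $K[p]^{ab}$. You instead treat Theorem~\ref{thm1} as a black box: arbitrary rank-gain witnesses $R_i$, normalized to trace zero via $R_i\mapsto pR_i-\tr_{L_i/K}(R_i)$ (and you correctly check that this preserves nonvanishing in $\paren{A(L_i)/A(K)}\otimes\bb{Q}$), then independence extracted with the idempotents $\pi_i=\frac{1}{|H_i|}\sum_{h\in H_i}h$ in $\bb{Q}[\Gal(M_n/K)]$. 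This is a genuinely different route and arguably a better-structured one: a corollary ought to follow from the statement of the theorem rather than from objects internal to its proof, and yours does. Your isogeny-factor step $f_*f^*=[\deg f]$ also covers $J_\Cal{X}$, which the statement of Theorem~\ref{thm1} by itself does not address; the paper would instead get $J_\Cal{X}$ by applying Proposition~\ref{thm3} directly to the Galois covering $\Cal{X}\to\bb{P}^1_K$.

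The one flaw: you justify $L_i\subseteq K[p]^{ab}$ by saying that since each $L_i/K$ has prime degree $p$ it is cyclic. Prime degree does not imply Galois (consider $\bb{Q}(\sqrt[3]{2})/\bb{Q}$), so as written this step fails, and it is essential --- without the $L_i$ being abelian over $K$ their compositum need not meet $K[p]^{ab}$ in anything useful. What saves you is the hypothesis of Theorem~\ref{thm1} that $\vphi$ is a degree $p$ \emph{Galois} covering (forcing $\zeta_p\in K$): by the Galois clause of Proposition~\ref{thm3}, equivalently Lemma~\ref{lma1}, the extensions $L_i/K$ it produces are Galois with group $\bb{Z}/p\bb{Z}$, hence cyclic. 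Cite that instead, and your argument is complete.
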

For $p=7$, the curve $\Cal{X}$ is the Klein quartic, which is also known to be the modular curve $X(7)$. It follows,
that we not only have rank gain for the quotient curve, which Elkies \cite{Elkies} denotes by $E_k$, but also
for any curve in the same isogeny class as $E_k$ and more generally for any elliptic curve with conductor 49. However,
in all cases the degree $7$ extensions $L/K$ obtained are only Galois extensions if $K$ contains $\bb{Q}(\zeta_7)$

From a slightly different perspective, it can
be sometimes shown that the extensions $L/K$ in Proposition~\ref{thm3}
are Galois even when  the covering of $\bb{P}_K^1$ is not.
In particular, we prove the following theorem for the genus $1$-case.

\begin{thm}\label{thm2}
 There is an infinite family of elliptic curves over $\bb{Q}$ such
 that each member gains rank over infinitely many linearly disjoint
 cyclic degree $3$-extensions of $\bb{Q}$.
\end{thm}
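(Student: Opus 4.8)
The plan is to realize Theorem~\ref{thm2} as the genus $1$ instance of the machinery behind \pref{dia:1} and Proposition~\ref{thm3}, choosing the degree $3$ covering so that, although it is \emph{not} Galois over $\bb{Q}$, infinitely many of its fibers cut out cyclic cubic fields. Concretely, I would take $\vphi\colon\Cal{X}\to\bb{P}^1_{\bb{Q}}$ to be a degree $3$ cover with $\Cal{X}$ of genus $1$, whose geometric and arithmetic monodromy group is the full $\Sym{3}$, ramified at exactly two simple branch points (transpositions) together with two totally ramified points ($3$-cycles). A Riemann--Hurwitz count $2g-2=-2\cdot 3+(1+1+2+2)$ forces $g=1$, the totally ramified fibers supply rational points identifying $\Cal{X}$ with its Jacobian $J_\Cal{X}$, and — this is the reason for wanting only two transpositions — the quadratic resolvent of the generic fiber, namely the discriminant cover $w^2=\Delta(t)$ (here $t$ is a coordinate on $\bb{P}^1$ and $\Delta(t)$ the fiber discriminant), is branched only where $\Delta$ vanishes to odd order, i.e.\ at those two transposition points, and is therefore a curve of genus $0$. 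I would arrange the construction so that the $j$-invariant of $\Cal{X}$ traces out a modular function on $\Gamma_0(3)$ as the model varies; letting that parameter run produces the asserted infinite family.

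Granting such a model, I would first lift: by Proposition~\ref{thm3}, a point $t\in\bb{P}^1(\bb{Q})$ lifts to $Q\in\Cal{X}(L_t)$ with $[L_t:\bb{Q}]$ dividing $3$, where $L_t=\bb{Q}(Q)$ and the Galois closure of $L_t$ has group inside $\Sym{3}$. The field $L_t$ is cyclic of degree $3$ precisely when the fiber cubic is irreducible with square discriminant, and the locus of such $t$ is exactly the conic $w^2=\Delta(t)$ of the previous paragraph. Since this conic has genus $0$ and, because the transposition branch points are rational, a rational point, it is parametrized by a $\bb{P}^1$ worth of rational $t$; restricting $t$ to this parametrizing line and invoking Hilbert irreducibility there makes $L_t/\bb{Q}$ cyclic of degree exactly $3$ for all but a thin set of the parameter.

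Next I would ensure that these cyclic cubic fields are infinitely many and pairwise linearly disjoint. The extension $L_t$ is ramified only at primes dividing $\Delta(t)$ and at the primes of bad reduction of $\Cal{X}$; as $t$ runs through the parametrizing line the value $\Delta(t)$ acquires new prime factors, so a height and ramification bookkeeping extracts an infinite, pairwise linearly disjoint subfamily $\{L_n\}$, exactly as in the disjointness arguments underlying Theorem~\ref{thm1} and Proposition~\ref{thm4}. Then I would invoke the rank-gain step: each lift $Q\in\Cal{X}(L_n)$ is non-torsion once $t$ has large canonical height, and its trace-zero part in the nontrivial $\Cal{O}$-isotypic piece is nonzero because $L_n/\bb{Q}$ is a nontrivial cyclic extension, so by the generalization of Silverman's lemma \cite{Silverman} the elliptic curve $J_\Cal{X}\cong\Cal{X}$ gains rank over infinitely many of the $L_n$. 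Varying the $\Gamma_0(3)$-parameter then yields the promised infinite family of elliptic curves.

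The main obstacle is the compatibility hidden in the first two steps: one must produce a single model in which the discriminant cover is simultaneously of genus $0$ \emph{and} carries a rational point — so that the cyclic (square-discriminant) specializations form a genuinely infinite family rather than the empty set — while the remaining ramification stays rich enough to force linear disjointness and the lifts stay independent enough for Silverman's lemma to register a \emph{strict} rank increase. It is precisely the $\Gamma_0(3)$-modular parametrization, engineered so that $\Delta(t)$ is a square times a degree $2$ form possessing a rational zero, that reconciles these requirements; checking that this conic has a rational point and that the resulting cyclic cubics remain distinct under specialization is the technical heart of the proof.
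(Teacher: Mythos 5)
Your outline reconstructs the paper's strategy almost exactly: the paper's proof also rests on a genus $1$ curve with a non-Galois degree $3$ map to $\bb{P}^1$ having precisely your ramification type --- the plane cubic $f(x,t)=x^3+(a_4-a_1t)x+(a_6-a_3t-t^2)=0$ projected to the $t$-line has two transposition branch points (the simple zeros of a quadratic factor $Q(t)$ of the discriminant) and two totally ramified points (at $t=a_4/a_1$, where the fiber degenerates to $x^3=0$, and at $t=\infty$) --- and it likewise identifies the square-discriminant locus with a conic, parametrizes it rationally, and finishes with Lemma~\ref{lma2}, Hilbert irreducibility, and the rank-gain mechanism of Proposition~\ref{thm3} and Lemma~\ref{lma3}. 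So you have correctly identified the architecture of the argument.

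The genuine gap is that you never construct the cover: you assert that a model satisfying all the needed compatibilities can be ``engineered,'' and you defer exactly that verification as ``the technical heart.'' But that construction \emph{is} the paper's proof, and nothing in your sketch supplies it. Concretely, the paper (i) imposes $a_3=a_1\frac{a_6}{a_4}-\frac{a_4}{a_1}$ so that the discriminant factors as $(a_4-a_1t)^2Q(t)$ with $Q$ quadratic; (ii) normalizes $a_6$ by completing the square so that the condition that $Q(t)$ be a square becomes the conic $1=u^2+3v^2$, whose rational point $(u,v)=(1,0)$ yields the explicit parametrization $u=\frac{1-3s^2}{1+3s^2}$, $v=\frac{2s}{1+3s^2}$; and (iii) computes $j=256\,\frac{(a_1^4+54)^3a_1^4}{(4a_1^4-27)^3}$, so that the surviving free parameter $a_1$ demonstrably produces infinitely many distinct $j$-invariants. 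Your proposal instead leans on the $\Gamma_0(3)$-modular parametrization to guarantee both existence and infinitude, which reverses the paper's logic: there, the $\Gamma_0(3)$ description (Remark~\ref{eta}) is an a posteriori observation about the $j$-formula, made only after the family was written down, and is not available as a construction tool. Two smaller discrepancies: in the actual model the transposition branch points are conjugate quadratic irrationalities, so your mechanism for producing a rational point on the discriminant conic (rationality of those branch points) is not what occurs --- the rational point is $(u,v)=(1,0)$; and the strict rank increase is obtained in the paper not by a height/trace-zero argument but by the linear-disjointness-plus-torsion-finiteness argument of Proposition~\ref{thm3}, which your sketch would still need to run in full.
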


Here, the infiniteness of the family means that the $j$-invariants of elliptic curves in the family are distinct.
Since an infinite family is obtained, one way of looking at the proof is in terms of a surface, the fibers of which
are curves $\Cal{X}$ with maps to $\bb{P}^1$.  When viewed in this way, the curves $\Cal{X}$ that occur in the
construction can be seen to have genus 4. Recently, Professor Masanobu Kaneko pointed out to the first author that the $j$-invariant
constructed in Theorem~\ref{thm2} is paramerized by a modular function on $\Gamma_0(3)$ whose more details are given at the end of the paper, hence it also can
be viewed in terms as modularity just like the case of the Klein Quartic in Theorem \ref{thm1}.

It is worthwhile to compare Theorem~\ref{thm2} to Theorem~C in \cite{Kuwata1} which shows that if an elliptic curve over $\bb{Q}$
has at least 6 rational points, then it gains rank over infinitely many cyclic degree $3$-extensions of
$\bb{Q}$.  The proof of theorem~\ref{thm2} in this paper provides a family containing curves with rank zero over $\bb{Q}$ and
rational torsion isomorphic to $\bb{Z}/3\bb{Z}$, hence Theorem~C \cite{Kuwata1} does not apply to those curves.  On the other
hand all curves in the family found in Theorem~\ref{thm2} have non-trivial rational 3-torsion, hence Theorem~C in \cite{Kuwata1}
applies to any Elliptic curve with positive rank over $\bb{Q}$ and trivial torsion, but none of those curves are included in
our proof of Theorem~\ref{thm2}. As a consequence not only are the methods used to obtain the theorems completely, but there
are elliptic curves that are known by one of them to grain rank, but not by the other.

The next proposition provides the basic machinery used to prove \ref{thm1} and \ref{thm2} above.

\begin{prop}\label{thm3}
Let $\Cal{X}$ be a smooth irreducible curve of genus $g>1$,
defined over a number field $K$, and let
\[
 \Cal{X}\to \bb{P}^1_K
\]
be a covering map of degree $d>1$. Then  there exist infinitely
many linearly disjoint extensions $L/K$ of degree $d$ over which the
Jacobian variety of $\Cal{X}$ gains rank. Additionally, if the covering is
Galois with group $G$, than there are infinitely many such extensions
which are Galois with group $G$.
\end{prop}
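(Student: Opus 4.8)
The plan is to realize the degree $d$ extensions as residue fields of the fibres of $\varphi\colon\Cal{X}\to\bb{P}^1_K$, to push the fibre points into $J_{\Cal{X}}$, and to detect rank growth through the trace map. First I would apply Hilbert's irreducibility theorem to $\varphi$: since $\bb{P}^1$ over a number field is Hilbertian, there is a thin set $T\subset\bb{P}^1(K)$ so that for every $P\in\bb{P}^1(K)\setminus T$ which is not a branch point, the fibre $\varphi^{-1}(P)$ is irreducible over $K$ and \'etale of degree $d$; hence any $Q_P\in\varphi^{-1}(P)$ generates a field $L_P=K(Q_P)$ with $[L_P:K]=d$, and its $K$-conjugates are exactly the $d$ points of the fibre. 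By Northcott's theorem there remain infinitely many admissible $P$ of arbitrarily large height. In the Galois case the same theorem yields $L_P/K$ Galois with group $G$ for $P$ outside a (possibly larger) thin set, which disposes of the final assertion.

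Next I would make the trace of these points \emph{constant}. Fix the $K$-rational divisor class $D_0=\varphi^{*}[\infty]$ of degree $d$ and, via the Abel--Jacobi morphism to $\mathrm{Pic}^1$, define the $K$-morphism $\iota\colon\Cal{X}\to J_{\Cal{X}}$ by $\iota(Q)=\brac{dQ-D_0}$ (this needs no $K$-rational point on $\Cal{X}$). Because every fibre divisor satisfies $\sum_i Q_i=\varphi^{*}P\sim D_0$, a one-line computation gives $\tr_{L_P/K}\iota(Q_P)=\brac{d\textstyle\sum_i Q_i-dD_0}=0$, so each $\iota(Q_P)$ lies in the kernel of the trace map $\tr_{L_P/K}\colon J_{\Cal{X}}(L_P)\to J_{\Cal{X}}(K)$.

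The rank gain then follows from what the introduction calls the generalization of Silverman's lemma. Since $\tr_{L/K}$ acts as multiplication by $d$ on $J_{\Cal{X}}(K)\otimes\bb{Q}$, the operator $\tfrac1d\tr_{L/K}$ is a projection giving a splitting $J_{\Cal{X}}(L)\otimes\bb{Q}=\paren{J_{\Cal{X}}(K)\otimes\bb{Q}}\oplus\ker\paren{\tr_{L/K}}$, and rank gain over $L$ is equivalent to the second summand being nonzero. By functoriality of heights, $\hat h_{J_{\Cal{X}}}\paren{\iota(Q_P)}$ is comparable to $h_{\bb{P}^1}(P)$, which tends to infinity, so for all but finitely many $P$ the point $\iota(Q_P)$ is non-torsion; being a non-torsion element of $\ker\paren{\tr_{L_P/K}}$, it forces $\mathrm{rank}\,J_{\Cal{X}}(L_P)>\mathrm{rank}\,J_{\Cal{X}}(K)$. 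This non-torsion statement, derived from the height growth, is exactly the input supplied by Silverman's lemma.

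Finally, genus $g>1$ enters through Faltings' theorem: $\Cal{X}(L)$ is finite for every number field $L$, so each field occurs as $L_P$ for only finitely many $P$, and the infinitely many admissible $P$ therefore yield infinitely many \emph{distinct} degree $d$ fields. To strengthen ``distinct'' to ``linearly disjoint,'' I would choose the $P$ inductively, at each stage arranging that $L_P$ ramifies at a prime unramified in the compositum assembled so far; in the cyclic covers of interest every branch point is totally ramified, so a specialization $p$-adically close to a branch point forces a new totally ramified prime and hence linear disjointness from the previous compositum. I expect this last step --- guaranteeing genuine linear disjointness rather than mere distinctness, uniformly throughout the construction --- to be the main technical obstacle, whereas the conceptual heart is the vanishing-trace identity together with the height growth of the preceding paragraphs.
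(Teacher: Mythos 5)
Your core mechanism is correct, and it is genuinely different from the paper's. The paper also specializes via Hilbert irreducibility and also uses the map $Q\mapsto d[Q]-D$, but it certifies rank gain differently: non-torsionness of the resulting points comes from Lemma~\ref{lma3} (a uniform bound, proved by reduction at two primes and Lang--Weil, on torsion over \emph{all} extensions of degree $\le d$ --- this, not a height statement, is what the paper means by the generalized Silverman lemma), and the fact that these points contribute new rank comes from a Galois-equivariance argument (Proposition~\ref{disjJac}) showing that $\sig(D_i)-D_i$ is non-torsion, followed by an explicit linear-independence computation. Your route --- observing that $\tr_{L_P/K}\iota(Q_P)=0$ because the $K$-conjugates of $Q_P$ fill out the fibre $\varphi^{*}P\sim D_0$, and that a non-torsion point of $\ker\tr_{L_P/K}$ forces rank growth via the splitting of $J_{\Cal{X}}(L_P)\otimes\bb{Q}$ --- is sound, and the height argument ($\hat h(\iota(Q_P))\to\infty$ as $h(P)\to\infty$, so all but finitely many $\iota(Q_P)$ are non-torsion) is a legitimate substitute for Lemma~\ref{lma3}. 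That part of your proposal is, if anything, cleaner than the paper's.

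The genuine gap is exactly where you predicted: linear disjointness. Your inductive scheme needs, at every stage, a $K$-rational branch point at which the covering is \emph{totally} ramified, so that a specialization $p$-adically close to it produces a prime of $K$ totally ramified in $L_P$ and unramified in the compositum built so far. But Proposition~\ref{thm3} concerns an arbitrary covering of degree $d>1$: its branch points need not be $K$-rational, and the ramification above them need not be total (total ramification at every branch point is essentially a feature of cyclic covers of prime degree, i.e.\ of the situation of Theorem~\ref{thm1}, not of the general case). Moreover, for non-Galois $L_P$ even a trivial intersection $L_P\cap M=K$ with the previously assembled compositum $M$ would not imply linear disjointness, so weakening the ramification requirement does not rescue the step. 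The paper's fix is simple and fully general, and you should adopt it (it is the content of Lemma~\ref{lma1}): at each stage apply Hilbert's irreducibility theorem over the compositum $M$ of the extensions already constructed; the relevant thin subset of $\bb{P}^1(M)$ meets $\bb{P}^1(K)$ in a set that is thin over $K$ (Serre, Prop.~3.2.1), and any $K$-point $P$ avoiding it lifts to a point $Q_P$ with $[M(Q_P):M]=d$. Since $M(Q_P)=M\cdot K(Q_P)$ and $[K(Q_P):K]\le d$, this forces $[K(Q_P):K]=d$ and $K(Q_P)$ linearly disjoint from $M$ over $K$, in both the general and the Galois cases, with no ramification hypotheses at all; it also makes your appeal to Faltings' theorem unnecessary.
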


It has already been pointed out that maps can be obtained from $\Cal{X}\to\Cal{Y}$
by taking a quotient, or in the case where $\Cal{X}$ is a modular curve maps can be
obtained to elliptic curves $E$ by the modularity theorem.  But there is another option:
given a morphism from the Jacobian variety of $\Cal{X}$ to another abelian variety $A$,
all results can be extended to the abelian variety. Also if we are not concerned
with the extensions $L/K$ being Galois, then it is relatively easy to obtain the
following generalization of Theorem~1 in \cite{Costa}:
\begin{prop}\label{thm4}
 Let $\Cal{X}$ be a smooth irreducible curve of genus $g>1$,
defined over a number field $K$. Then there is a constant $N$
depending on $\Cal{X}$ but not on $K$ such that for every positive integer
$d\geq N$, there exist infinitely many linearly disjoint
degree $d$-extensions $L/K$ over which the Jacobian variety of $\Cal{X}$ gains rank.
\end{prop}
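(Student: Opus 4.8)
The plan is to deduce Proposition~\ref{thm4} directly from Proposition~\ref{thm3}. It suffices to produce, for every integer $d \geq N$, a covering $\Cal{X}\to\bb{P}^1_K$ of degree exactly $d$ defined over $K$: once such a covering is in hand, Proposition~\ref{thm3} furnishes infinitely many linearly disjoint degree-$d$ extensions $L/K$ over which $J_{\Cal{X}}$ gains rank, which is precisely the assertion for that value of $d$. Taking the union over $d \geq N$ of the families so obtained then gives the full statement. Thus the entire content beyond Proposition~\ref{thm3} is the construction of degree-$d$ coverings over $K$ for all large $d$, with a threshold $N$ depending only on $\Cal{X}$.

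To build such a covering it is enough to exhibit a function $f \in K(\Cal{X})$ whose polar divisor has degree exactly $d$, since the induced morphism $f\colon \Cal{X}\to\bb{P}^1_K$ is then finite of degree $\deg (f)_\infty = d$, and $d>1$ because $g>1$. First I would fix a $K$-rational point $P$ on $\Cal{X}$ (or, more generally, a $K$-rational divisor of degree $1$) and apply Riemann--Roch to the multiples $dP$. For $d > 2g-2$ one has $\ell(dP) = d - g + 1$, where $\ell(D)=\dim_K L(D)$, so for every $d \geq 2g$ the strict inequality $\ell(dP) > \ell((d-1)P)$ holds; any $f \in L(dP)\setminus L((d-1)P)$ then has a pole of order exactly $d$ at $P$ and no other pole, yielding a covering of degree exactly $d$ defined over $K$. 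This shows that $N = 2g$ works, a bound depending only on the genus of $\Cal{X}$ and hence unchanged under extension of the base field $K$, as required.

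The main obstacle is the existence, over $K$ itself, of functions (equivalently, $K$-rational divisors) of every prescribed large degree. When $\Cal{X}(K)\neq\emptyset$ the point $P$ above is available and the construction goes through verbatim for all $d \geq 2g$. In general the degree of any $K$-rational divisor is a multiple of the index $I(\Cal{X}/K)$, which divides $\deg K_{\Cal{X}} = 2g-2$; securing a $K$-rational divisor of degree $1$ (in particular, whenever $\Cal{X}$ has a rational point) is therefore exactly the place where the hypotheses on $\Cal{X}$ and $K$ enter, and producing the requisite rational divisors uniformly in $d$ is the one step I expect to require genuine care. Everything else is formal: the passage from a single degree-$d$ covering to infinitely many linearly disjoint degree-$d$ extensions with rank gain is supplied wholesale by Proposition~\ref{thm3}.
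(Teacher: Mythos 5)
Your reduction to Proposition~\ref{thm3} is the same first move as the paper's, and your Riemann--Roch construction is correct as far as it goes: if $\Cal{X}$ has a $K$-rational point $P$ (or index one, via a $K$-rational divisor of degree $1$), then for every $d\geq 2g$ a function in $L(dP)\setminus L((d-1)P)$ gives a covering $\Cal{X}\to\bb{P}^1_K$ of degree exactly $d$ defined over $K$, and Proposition~\ref{thm3} finishes that case with the threshold $N=2g$. But the step you flag as requiring ``genuine care'' is not a technical loose end; it is a genuine gap, and it cannot be filled within your strategy. The hypotheses of Proposition~\ref{thm4} grant no rational point and no degree-$1$ divisor, and curves of genus $g>1$ over number fields can have index $I(\Cal{X}/K)>1$: for instance the genus-$2$ curve $y^2=-x^6-1$ over $\bb{Q}$ has no real points, so every $\bb{R}$-rational (hence every $\bb{Q}$-rational) divisor has even degree, and its index is $2$. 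For such a curve your plan fails irreparably, not merely at the construction you chose: if $\phi:\Cal{X}\to\bb{P}^1_K$ is \emph{any} covering of degree $d$ defined over $K$, the fibre over a $K$-rational point of $\bb{P}^1$ is a $K$-rational divisor of degree $d$, so $I(\Cal{X}/K)$ divides $d$. Hence when $I>1$ there simply are no coverings of $\Cal{X}$ itself of degree $d$ for any $d$ prime to $I$, however large, and ``produce a degree-$d$ covering of $\Cal{X}$ for all $d\geq N$'' is impossible. Your remark that the rational divisor is ``exactly the place where the hypotheses on $\Cal{X}$ and $K$ enter'' misreads the statement: the proposition is asserted for all smooth curves of genus $g>1$, with $N$ independent of $K$.

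This obstruction is precisely why the paper's proof is structured differently: it never seeks degree-$d$ maps from $\Cal{X}$ itself. Starting from a plane model $f(v_1,v_2)=0$ of degree $n$ (normalized by a projective change of coordinates so that the Newton polygon of $f$ has a corner at $(n-1,1)$), it performs the substitution $v_1=s+t^{k_1}$, $v_2=b+t^{k_2}$ with $k_1>k_2$ and $b$ chosen by Hilbert irreducibility, producing an auxiliary irreducible curve $\Cal{C}_1$ which dominates $\Cal{X}$ (through $(s,t)\mapsto (v_1,v_2)$, a map of degree $k_2$) and which carries a covering $\Cal{C}_1\to\bb{P}^1$ of degree $d=k_1(n-1)+k_2$ given by the $s$-coordinate; Proposition~\ref{thm3} is then applied to $\Cal{C}_1$, and the conclusion for $J(\Cal{X})$ is extracted through the dominant $K$-map $\Cal{C}_1\to\Cal{X}$. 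Because the degree-$d$ covering lives on the auxiliary curve rather than on $\Cal{X}$, the index of $\Cal{X}$ places no constraint on $d$, and every $d\geq N=n(n-1)+1$ is realized. In the special case of index one your argument is cleaner and gives a better-looking threshold, but as a proof of the proposition as stated it is incomplete, and the case it omits is the whole difficulty.
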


It is important to note that this generalization improves on two details:
\begin{enumerate}
 \item For Theorem~1 in \cite{Costa}, it is assumed that the curve is bi-rationally
 equivalent over $K$ to a plane curve of the form $f(x)=g(y)$, where the degrees of $f$ and $g$
 are co-prime.  This assumption excludes hyper-elliptic curves $y^2=f(x)$ for which the degree of $f$ is even,
 and it excludes curves $f(x,y)=0$ for which the variables cannot be separated. By contrast the proof
 of Proposition \ref{thm4} is done generally enough so as to cover these cases as well.
 \item For Theorem~1 in \cite{Costa} the degree is assumed to be prime $p$, whereas in Proposition \ref{thm4}
 the degree may be composite.
\end{enumerate}
The constant $N$ is explicit as it is shown in its proof. Even with these improvements however,
the biggest shortcoming of Proposition~\ref{thm4} is the fact that the extensions obtained
are generally not Galois, and an analogous statement of the result for Galois coverings remains an open problem.

\

{\bf Acknowledgments.} This project began as a collaboration with Neeraj Kashyap who made some key
contributions to the construction in the early stages, but later became
otherwise occupied.  The authors would also like to thank Taylor Dupuy
for suggesting the work of Ledet, and Michael Larsen for suggesting
generalizations to our results, which have been incorporated in
the current version. The authors would also like to thank Professor Masanobu Kaneko for making the observation about eta functions as in Remark~\ref{eta}.

\

\section{Proofs}

To prove Proposition~\ref{thm3} and  Theorem~\ref{thm1} we will need the
following lemma, originally proven by Silverman in the genus 1 case \cite{Silverman}.
We include a self-contained proof cause due to the lack of its presence in the literature.

\begin{lma}\label{lma3}
 Let $K$ be a number field, and $A$ be an abelian variety
 defined over $K$. Then
 for every positive integer $d$, we have
\[
  \abs{\bigcup_{[L:\bb{Q}]\leq d} A(L)_{tor}}<\infty,
\]
where $A(L)_{tor}$ denotes the set of torsion points of $A$ that are $L$-rational.
\end{lma}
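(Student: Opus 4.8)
The plan is to reduce everything to a single uniform bound on the \emph{order} of a torsion point. If I can produce an integer $N=N(A,d)$, depending on $A$ and $d$ but not on $L$, such that $NP=0$ for every $P\in\bigcup_{[L:\bb{Q}]\le d}A(L)_{tor}$, then the whole union lies in the finite group $A[N]$ and the lemma follows at once. The crucial feature is that $A$ is \emph{fixed}: this is what makes the statement accessible, even though uniform boundedness of torsion over \emph{all} abelian varieties of a given dimension over number fields of degree $\le d$ is open in general (for elliptic curves it is Merel's theorem). I may assume each field $L$ occurring contains $K$, since $A(L)$ presupposes an embedding $K\hookrightarrow L$ (in particular the union is empty when $d<[K:\bb{Q}]$), and I write $g=\dim A$.

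The engine is reduction modulo a prime of good reduction, together with injectivity on prime-to-$p$ torsion. Since $A/K$ has good reduction outside a finite set of primes, I would fix two \emph{distinct} rational primes $p_1\ne p_2$ below which $A$ has good reduction; good reduction persists after the base change to any $L\supseteq K$. For $L$ with $[L:\bb{Q}]\le d$ and a prime $\mathfrak{l}_i$ of $L$ over $p_i$, the residue field $k_i$ satisfies $|k_i|=p_i^{f_i}$ with $f_i\le[L:\bb{Q}]\le d$, so $|k_i|\le p_i^d$. The kernel of reduction $A(L_{\mathfrak{l}_i})\to\wtilde{A}(k_i)$ is the formal group, a pro-$p_i$ group, hence reduction is injective on prime-to-$p_i$ torsion. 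Thus the prime-to-$p_i$ part of $P$ injects into $\wtilde{A}(k_i)$, whose cardinality is at most $(1+p_i^{d/2})^{2g}=:B_i$ by the Weil bound. Setting $N_i:=\mathrm{lcm}(1,2,\dots,\lfloor B_i\rfloor)$, I conclude that the prime-to-$p_i$ part of $\mathrm{ord}(P)$ divides $N_i$.

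The one genuinely delicate point is that a single prime controls only the prime-to-$p_i$ part of the torsion: the kernel of reduction is pro-$p_i$ and can carry $p_i$-power torsion, so reduction at $p_1$ says nothing about the $p_1$-part of $P$. This is precisely why two primes are needed, and combining the two bounds is the crux. Writing $n=\mathrm{ord}(P)$, for every prime $q$ at least one of $q\ne p_1$, $q\ne p_2$ holds, so $v_q(n)\le\max\big(v_q(N_1),v_q(N_2)\big)=v_q\big(\mathrm{lcm}(N_1,N_2)\big)$. Hence $n$ divides $N:=\mathrm{lcm}(N_1,N_2)$, which depends only on $A$ and $d$. Therefore the union is contained in $A[N]$, a finite set, completing the proof.

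I expect the main obstacle to be conceptual rather than computational: recognizing that fixing $A$ is what licenses $N$ to depend on $A$, and that one reduction necessarily misses the residue-characteristic part of the torsion, which forces the two-prime device and the valuation-by-valuation combination above. The supporting facts — the residue-degree bound $f_i\le d$, injectivity of reduction on prime-to-$p$ torsion via the formal group, and the Weil estimate for $|\wtilde{A}(k_i)|$ — are all standard.
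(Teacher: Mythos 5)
Your proof is correct and follows essentially the same route as the paper's: both reduce modulo two primes of distinct residue characteristics $p_1\neq p_2$ at which $A$ has good reduction, use injectivity of reduction on prime-to-$p_i$ torsion, bound the reductions by a Weil/Lang--Weil point count with residue degree at most $d$, and combine the two primes to control all of the torsion. The only difference is bookkeeping: you bound the \emph{order} of each torsion point and conclude the union lies in $A[N]$ for a single $N=\mathrm{lcm}(N_1,N_2)$, while the paper bounds $\abs{A_m(L)}$ uniformly in $m$ and $L$ via the factorization of $m$ into its prime-to-$p_1$ and prime-to-$p_2$ parts; your final combination step is, if anything, spelled out more carefully.
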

The general proof is similar, but it appears nowhere in print and
so we briefly give the proof here.
\begin{proof}
Let $\mfrak{p}_1$ and $\mfrak{p}_2$ be two prime ideals of $K$ with
distinct residue characteristics $p_1$ and $p_2$ respectively, at which $A$ has good reduction.
Let $L/K$ be an extension such that $[L:K]\leq d$, and let $\mfrak{P}_1$ and
$\mfrak{P}_2$ be two primes above $\mfrak{p}_1$ and $\mfrak{p}_2$ respectively. Then there exists an injective map
\[
 A_m(L)\to A_m(\Cal{O}_L/\mfrak{P}_i)
\]
where $A_m$ denotes the $m$-torsion of $A$ (see for example \cite[Ch.~7,~Proposition~3]{Bosch}).
By the Lang-Weil bound we have
\[
 |A_m(\Cal{O}_L/\mfrak{P}_i)|\ll N_{L/\bb{Q}} \mfrak{P}_i
 \ll \paren{N_{K/\bb{Q}}\mfrak{p}_i}^d
\]
where the implied constants depend only on $A$. For an arbitrary positive integer $m$,
if $m_i$ denotes the largest factor of $m$ not divisible by $p_i$, then by
the structure theorem of finite abelian groups we have
\[
 |A_m(L)|\leq |A_{m_1}(L)|\cdot |A_{m_2}(L)|\ll \paren{N_{K/\bb{Q}}\mfrak{p}_1\mfrak{p}_2}^d.
\]
Since the right hand side of this estimate does not depend on $m$ or $L$, and since number
of options for $\Cal{O}_L/\mfrak{P}_i$ is finite (up to isomorphism),
this completes the proof.
\end{proof}

We will also need the following lemma which is essentially a direct application
of Hilbert's irreducibility theorem.

\begin{lma}\label{lma1}
Let $\Cal{X}$ be a smooth irreducible curve of genus $g>0$ defined
over a number field $K$, and let
\[
 \Cal{X}\to \bb{P}^1_K
\]
be a covering map of degree $d>1$. Then there exist infinitely
many linearly disjoint extensions $L/K$ of degree $d$ such that
$\Cal{X}$ has an $L$-rational point. Additionally, if the covering is
Galois with group $G$, then there are infinitely many such extensions $L/K$
which are Galois with group $G$.
\end{lma}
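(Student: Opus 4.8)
The plan is to realize the covering by an explicit equation, specialize using Hilbert's irreducibility theorem, and then build the extensions one at a time so as to force linear disjointness. Since $\Cal{X}\to\bb{P}^1_K$ has degree $d$ and the base has function field $K(t)$, the extension $K(\Cal{X})/K(t)$ is a field extension of degree $d$; choosing a primitive element and clearing denominators I would fix an irreducible $f(t,y)\in K[t,y]$ of degree $d$ in $y$ with $K(\Cal{X})=K(t)[y]/(f)$. For $a\in K$ avoiding the branch locus and the zeros of the leading coefficient and discriminant of $f$, a root $\theta$ of $f(a,y)$ generates $L_a=K(\theta)$ and determines a point $(a,\theta)$ on the smooth projective model of $\Cal{X}$, so $\Cal{X}(L_a)\neq\emptyset$. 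Here I would assume, as is implicit in the statement, that $\Cal{X}$ is geometrically irreducible; otherwise the constant field of $K(\Cal{X})$ would sit inside every $L_a$ and no two could be linearly disjoint over $K$. Because $K$ is Hilbertian, Hilbert's irreducibility theorem already gives, for all $a$ outside a thin subset of $\bb{A}^1(K)$, that $f(a,y)$ is irreducible over $K$ of degree $d$, hence $[L_a:K]=d$; this produces infinitely many degree $d$ extensions $L/K$ with $\Cal{X}(L)\neq\emptyset$, but says nothing about disjointness.

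The disjointness I would obtain by induction. Suppose $L_1,\dots,L_n$ have been found, mutually linearly disjoint of degree $d$, each carrying a point of $\Cal{X}$, and set $M_n=L_1\cdots L_n$, so $[M_n:K]=d^n$. To adjoin $L_{n+1}$ it suffices to find $a\in K$ for which $f(a,y)$ is irreducible over $M_n$ of degree $d$: then $L_{n+1}:=K[y]/(f(a,y))$ has degree $d$ over $K$, while $L_{n+1}\otimes_K M_n=M_n[y]/(f(a,y))$ is a field, which is exactly the assertion that $L_{n+1}$ and $M_n$ are linearly disjoint over $K$. Consequently $[L_1\cdots L_{n+1}:K]=d^{n+1}$ and the enlarged family is again mutually linearly disjoint, while irreducibility over $M_n$ forces $L_{n+1}\not\subseteq M_n$, so the new extension is genuine.

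The hard part is therefore to produce a $K$-rational specialization that is irreducible not merely over $K$ but over the larger field $M_n$. Since $\Cal{X}$ is geometrically irreducible, $f$ stays irreducible over $\wbar{K}(t)$, hence over $M_n(t)$, so Hilbert's irreducibility theorem applied over the number field $M_n$ shows that $f(a,y)$ is reducible over $M_n$ only for $a$ in a thin set $T_n\subseteq\bb{A}^1(M_n)$. What I need is that $T_n$ does not swallow all of $\bb{A}^1(K)$, and for this I would invoke the stability of thin sets under restriction to a subfield: for a finite extension $M_n/K$ the trace $T_n\cap\bb{A}^1(K)$ is again thin in $\bb{A}^1(K)$. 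This can be seen by Weil restriction, pulling the covering defining $T_n$ back along the diagonal $\bb{A}^1_K\hookrightarrow\mathrm{Res}_{M_n/K}\bb{A}^1_{M_n}$ to get a $K$-morphism to $\bb{A}^1_K$ whose image contains $T_n\cap K$; a degree one component would, by the adjunction defining $\mathrm{Res}$, yield a rational section of the original degree $d\geq 2$ covering, which cannot exist, so every component has degree $\geq 2$ and the image is thin. As $K$ is Hilbertian the complement of a thin set in $\bb{A}^1(K)$ is infinite, so a suitable $a$ always exists and the induction runs forever, giving infinitely many linearly disjoint $L_i$.

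For the Galois refinement, if the covering is Galois with group $G$ then $K(\Cal{X})/K(t)$ is Galois with group $G$, and the Galois form of Hilbert's irreducibility theorem supplies a thin set outside of which the splitting field $L_a$ of $f(a,y)$ is Galois over $K$ with $\Gal(L_a/K)\cong G$ and $[L_a:K]=\abs{G}=d$. I would run the same induction, at each stage choosing $a\in K$ outside the finite union of the thin set forcing reducibility over $M_n$ and the thin set on which the specialization fails to be Galois with group $G$; thin sets are closed under finite unions and, by the restriction property just described, descend to $K$, so such $a$ remain plentiful. This yields infinitely many linearly disjoint degree $d$ extensions $L/K$ that are Galois with group $G$ and over which $\Cal{X}$ acquires a rational point, completing the argument.
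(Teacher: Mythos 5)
Your proposal is correct and takes essentially the same approach as the paper: produce points by Hilbert irreducibility, construct the extensions inductively so that each new one is irreducible (hence linearly disjoint) over the compositum of those already found, and descend the relevant thin set from that compositum back to $K$. Your Weil-restriction argument for that descent is precisely the proof of Serre's Proposition 3.2.1, which the paper simply cites at that step, just as it cites Serre's Corollary 3.3.4 for the Galois case that you instead handle with the Galois form of Hilbert's theorem.
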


\begin{proof}
In the Galois case this is  \cite[Corollary~3.3.4]{Serre}.  For the general
case, let $\Cal{Y}$ be the Galois closure of $\Cal{X}$ such that the covering
\[
 \pi:\Cal{Y}\to \bb{P}^1
\]
has group $G$, and let $H_0\leq G$ be a subgroup such
that $\Cal{Y}/H_0\cong \Cal{X}$. If $L/K$ is an arbitrary finite extension, then
\[
 A_L=\bigcup_{H<G} \pi_H(\Cal{X}/H)(L)
\]
is thin with respect to $L$, where the union is over proper subgroups $H$ of $G$,
and $\pi_H$ is the natural morphism to $\bb{P}^1$ induced by taking the quotient by $H$.
This set has the property that if $P\notin A_L$ then $P$ lifts to a point
 $Q\in\Cal{X}(M)$ where $M/L$ is a degree $d$-extension (the proof is analogous
 to that of \cite[Prop. 3.3.1]{Serre}).

 By  \cite[Prop. 3.2.1]{Serre} the set $A:=A_L\cap K$ is thin with respect to $K$,
 so by applying the above argument to $P\in \bb{P}^1(K)\rcomp A$ we obtain
 the result by induction by taking $L$ to be the compositum of all previously
 obtained degree $d$-extensions.
\end{proof}

\begin{rmk}\label{Special}
In the special case of a Galois covering ramified only at 0, 1 and $\infty$
and of degree $p$, where $p$ is an odd prime, we obtain an equation
\[
 y^p=x^r(x-1)^s
\]
as an affine model, where $r$, $s$ and $r+s$ are relatively prime to $p$.
When $K=\bb{Q}$ it can  be actually shown that 0, 1, and $\infty$
are the only exceptions.  This is done as follows.  Take $x=\frac{a}{b}$, where $a,b$
are rational integers.  By the pairwise relative primality of $a,b,a-b$, and
unique factorization, it can be shown that $a,b,a-b$ each must be a $p$th
power, say $A^p,B^p,C^p$ in $\bb{Z}$ respectively.  Then we must have an integer
solution to
\[
 A^p=B^p+C^p,
\]
by Fermat's Last Theorem we know that the only solutions are trivial,
and the trivial solutions correspond to the points $0$, $1$, and $\infty$.
\end{rmk}

\begin{prop}\label{disjJac}
Let $\Cal{X}$ be a smooth irreducible curve.
 Suppose there exist a finite subgroup $G\subseteq\Aut(\Cal{X})$ and a nontrivial
 abelian variety $\Cal{A}$ satisfying the following conditions:
\begin{enumerate}
\item there exists a morphism $f:\Cal{X}\rightarrow \Cal{A},$ and
\item there exists a homomorphism $\phi: G\rightarrow \Aut(\Cal{A})$ such that\\
$f(\sig(x))-f(x)=\phi(\sig)$ for all $\sig\in G$ and for all  $x\in \Cal{X}$.
\end{enumerate}
Then if $f(\Cal{X})$ generates $\Cal{A}$, then $\dim(\Cal{A})\leq g(\Cal{X}/G)$,
where $g(\Cal{X}/G)$ is the genus of $\Cal{X}/G$.
\end{prop}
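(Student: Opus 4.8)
The plan is to reduce the inequality $\dim(\Cal{A}) \le g(\Cal{X}/G)$ to a comparison of spaces of global holomorphic $1$-forms and then to exhibit an explicit injection between them built from $f$. Writing $\pi : \Cal{X} \to \Cal{X}/G$ for the quotient map and working in characteristic $0$ (so that $|G|$ is invertible), the first ingredient is the standard identification
\[
 g(\Cal{X}/G) = \dim \h^0(\Cal{X}/G, \Om^1) = \dim \h^0(\Cal{X}, \Om^1)^{G},
\]
where $\pi^{*}$ is injective with image precisely the $G$-invariant $1$-forms. On the other side, since $\Cal{A}$ is an abelian variety we have $\dim(\Cal{A}) = \dim \h^0(\Cal{A}, \Om^1)$, and every global $1$-form on $\Cal{A}$ is translation-invariant. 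Thus it suffices to produce an injective linear map $\h^0(\Cal{A}, \Om^1) \hookrightarrow \h^0(\Cal{X}, \Om^1)^{G}$, and the natural candidate is pullback $f^{*}$ along $f$.

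First I would check that $f^{*}$ lands in the $G$-invariants. Reading condition (2) as the assertion that, for each $\sig \in G$, the point $\phi(\sig) = f(\sig(x)) - f(x)$ is independent of $x$ — equivalently $f \circ \sig = t_{\phi(\sig)} \circ f$, where $t_a$ denotes translation by $a \in \Cal{A}$ — one computes, for a translation-invariant form $\om$ on $\Cal{A}$,
\[
 \sig^{*} f^{*} \om = (f \circ \sig)^{*} \om = (t_{\phi(\sig)} \circ f)^{*} \om = f^{*}\, t_{\phi(\sig)}^{*} \om = f^{*} \om,
\]
the last equality by translation-invariance of $\om$. Hence $f^{*}\om \in \h^0(\Cal{X}, \Om^1)^{G}$ for every $\om$.

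The heart of the argument, and the step I expect to be the \emph{main obstacle}, is the injectivity of $f^{*}$ on $\h^0(\Cal{A}, \Om^1)$; this is exactly where the hypothesis that $f(\Cal{X})$ generates $\Cal{A}$ is used. I would argue via the Abel--Jacobi factorization: after fixing a base point, $f$ factors as $\Cal{X} \xrightarrow{\iota} \Jac(\Cal{X}) \xrightarrow{h} \Cal{A}$ up to a translation, with $\iota$ the Abel--Jacobi map and $h$ a homomorphism of abelian varieties, and the condition that $f(\Cal{X})$ generates $\Cal{A}$ translates precisely into the surjectivity of $h$ (no proper abelian subvariety of $\Cal{A}$ contains a translate of $f(\Cal{X})$). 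In characteristic $0$ a surjection of abelian varieties induces a surjection on tangent spaces at the identity, hence an injection on cotangent spaces, so $h^{*}$ is injective on invariant differentials; since $\iota^{*}$ identifies $\h^0(\Jac(\Cal{X}), \Om^1)$ with $\h^0(\Cal{X}, \Om^1)$ and translations act trivially on invariant forms, $f^{*}$ is injective on $\h^0(\Cal{A}, \Om^1)$.

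Combining the three inputs gives
\[
 \dim(\Cal{A}) = \dim \h^0(\Cal{A}, \Om^1) \le \dim \h^0(\Cal{X}, \Om^1)^{G} = g(\Cal{X}/G),
\]
as claimed. The only genuinely load-bearing facts are the translation-invariance of differentials on $\Cal{A}$ and the precise interpretation of ``$f(\Cal{X})$ generates $\Cal{A}$'' as surjectivity of the induced map $h$ on Jacobians; granting these, the proof is essentially bookkeeping of pullbacks of $1$-forms. (One could bypass the Jacobian factorization by instead pulling $\om$ back along a dominant summation map $\Cal{X}^{N} \to \Cal{A}$ furnished by the generation hypothesis, but the factorization route seems cleanest.)
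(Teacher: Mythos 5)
Your proof is correct, and it is essentially the cotangent-space dual of the paper's argument, with one genuine structural difference. The paper first reduces to the case where $G$ acts \emph{freely} on $\Cal{X}$: it takes $(\Cal{X},G)$ with $|G|$ minimal, lets $H$ be the normal subgroup generated by elements with a fixed point, observes that condition (2) forces $\phi$ to vanish on $H$ so that $f$ factors through $\Cal{X}/H$, and concludes by minimality that $H$ is trivial. It then constructs the surjection onto $\Cal{A}$ directly on divisor classes, $F\paren{\sum_i [x_i]-\sum_i [y_i]}=\sum_i f(x_i)-\sum_i f(y_i)$ (this is the same map as your $h$, obtained without invoking the Albanese property by hand), notes that condition (2) makes $F$ $G$-equivariant because the translations $\phi(\sig)$ cancel in degree-zero sums, and passes to a $G$-equivariant surjection $\h_1(\Cal{X}(\bb{C}),\bb{R})\to\Lie(\Cal{A})$, where freeness of the action is what permits identifying the relevant ($G$-coinvariant) part of $\h_1$ with $\h_1(\Cal{X}/G(\bb{C}),\bb{R})$. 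Your route replaces homology and Lie algebras by holomorphic $1$-forms: surjectivity of $h$ gives injectivity of $f^{*}$ on $\h^0(\Cal{A},\Om^1)$, and condition (2), read as $f\circ\sig=t_{\phi(\sig)}\circ f$, puts the image inside $\h^0(\Cal{X},\Om^1)^{G}$. What your version buys is that the identification $\h^0(\Cal{X},\Om^1)^{G}\cong\h^0(\Cal{X}/G,\Om^1)$ holds in characteristic zero even when $G$ acts with fixed points, so the minimality/freeness reduction is never needed; it also makes precise the passage to the quotient, which in the paper's final line is left implicit (as literally written, $\dim\h_1(\Cal{X}(\bb{C}),\bb{R})=\dim\h_1(\Cal{X}/G(\bb{C}),\bb{R})$ would fail for nontrivial $G$, and what is meant is the coinvariant statement). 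What the paper's route buys is a more self-contained, divisor-theoretic construction that never relies on the descent of invariant differentials through ramified quotient maps.
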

\begin{proof} We may take a pair $(\Cal{X}, G)$ with $|G|$ minimal among all such pairs
satisfying conditions (1) and (2). Let $H$ be a subgroup of $G$ generated by all elements $h\in G$
satisfying that there exists $x\in \Cal{X}$ such that $h(x)=x$.
Then $H$ is a normal subgroup of $G$. Indeed, for $h\in H$ and $\sig\in G$,
there exists $x\in \Cal{X}$ such that $h(x)=x$ and so for $\sig^{-1}(x)\in \Cal{X}$,
$(\sig ^{-1}h\sig) (\sig^{-1}(x))=\sig^{-1}(h(x))=\sig^{-1}(x)$.

Condition (2) implies that for every $h\in H$, $\phi(h)=0$, in other words
$f(h(x))=f(x)$ for all $x\in\Cal{X}$.
Hence $f$ factors through $\Cal{X}/H$, giving us a quotient curve $\Cal{X}/H$ over
$\bb{P}^1$ which maps to $\Cal{A}$, and so if we replace $(\Cal{X},G)$ by $(\Cal{X}/H,G/H)$
both conditions (1) and (2) are still satisfied. By the minimality of $|G|$, $H$ must be trivial.
So each $\sig\in G$ has no fixed point on $\Cal{X}$.
Hence we may assume that $G$ acts freely on $\Cal{X}$.


We define a  map $F:J(\Cal{X})\to \Cal{A}$ as follows; for $x_i, y_j \in \Cal{X}$,
\[
F\paren{\sum_{i=1}^n[x_i]-\sum_{i=1}^n[y_i]}=\sum_{i=1}^n f(x_i)-\sum_{i=1}^n f(y_i).
\]
The map $F$ is well defined because every morphism from a rational curve to an abelian variety is constant,
hence linearly equivalent divisors map to the same element of $\Cal{A}$, and it is
surjective because of the assumption that $f(\Cal{X})$ generates $\Cal{A}$.
Moreover, condition $(2)$ implies that for $\sig\in G$,
\[
 F\paren{\sum\limits_{i=1}^n[\sig(x_i)]-\sum\limits_{i=1}^n[\sig(y_i)]}=F\paren{\sum\limits_{i=1}^n[x_i]-\sum\limits_{i=1}^n[y_i]},
\]
so $F$ is $G$-equivariant. Therefore,  by identifying $J(\Cal{X})(\bb{C})$
with the quotient $\h_1(\Cal{X}(\bb{C}),\bb{R})/\h_1(\Cal{X}(\bb{C}),\bb{Z})$,
there exists a surjective morphism from $\h_1(\Cal{X}(\bb{C}),\bb{R})$ onto the Lie algebra $\Lie(\Cal{A})$ of $\Cal{A}$,
and this is $G$-equivariant by \cite[Ch.~6]{S}. Hence we have
$\dim(\Cal{A})\leq \dim (\h_1(\Cal{X}(\bb{C}),\bb{R}))=  \dim (\h_1(\Cal{X/G}(\bb{C}),\bb{R}))=g(\Cal{X}/G).$
\end{proof}

\

Now we are ready to prove Proposition~\ref{thm3}.

\begin{proof}[Proof of Proposition~\ref{thm3}]
Since $\Cal{X}$ is a cover of $\bbp^1_K$ over $K$, there exist infinitely many
degree $d$ divisors $D_Q$ on $\Cal{X}$ that are the preimages of $K$-rational
points $Q$ of $\bbp^1$ and are defined over $K$.

Fix one of $D_Q$ say, $D$. Then pick any point $P_Q$ on $\Cal{X}$ that lies
in the preimage of $Q$ (i.e., belongs to the support of $D_Q$) and consider
the degree zero divisor $d [P_Q]-D$ and (its linear equivalence class)
the corresponding point $\widetilde{P_Q}$ of the Jacobian variety $J(\Cal{X})$ of $\Cal{X}$,
which defines a $K$-rational non-constant morphism $f$ from $\Cal{X}$ to $J(\Cal{X})$,
i.e. $f(P):=d [P]-D.$

Now by varying $Q$ in $\bbp^1_K$ or by Lemma~\ref{lma1}
there are infinitely many points $\{P_i\}_i$ on $\Cal{X}$
such that $P_i\in \Cal{X}(L_i)\setminus \Cal{X}(K)$,
where $L_i$ are linearly disjoint degree $d$-extensions of $K$.  Let
\[
 D_i=d[P_i]-D
\]
denote the corresponding divisor in $J(\Cal{X})$.  Each divisor $D_i$  gives us a point on $J(\Cal{X})$
defined over each degree $d$-extension $L_i$ of $K$ via the $K$-rational map $f$.
By Lemma~\ref{lma3}, all but finitely many $D_i$ are non-torsion points of $J(\Cal{X})$.

Now we claim that for infinitely many $i$, $D_i$ are not defined over $K$.
For the Galois closure $\Cal{Y}$ of $\Cal{X}$ which is a Galois cover of $\bbp^1_K$,
let $G_0$ be a group isomorphic to each Galois closure of $L_i$ over $K$.
Let $\pi: \Cal{Y}\rightarrow \Cal{X}$ and $H\subseteq G_0$ such that $\Cal{Y}/H=\Cal{X}$. Let
$G:=\{\sig\in G_0 : f(\pi(\sig(y)))-f(\pi(y)) \text{ is a constant for all } y\in \Cal{Y}\}.$
Then $G$ is normal in $G_0$ (similarly as we have shown in the proof of Proposition~\ref{disjJac})
and $H\subseteq G$. Let $\Cal{X}':=\Cal{Y}/G$. Then we have a non-constant map $g$ from $\Cal{X}$ to $\Cal{X}'$.
Since the genus $g(\Cal{X})$ of $\Cal{X}$ is greater than $1$, we have that $g(\Cal{X}) > g(\Cal{X}')$
unless $G= H$, and $g(\Cal{X}) =g(\Cal{X}')$ if and only if $G=H$ (i.e. $g$ is an isomorphism).
On the other hand, since $f(\Cal{Y})$ generates $J(\Cal{X})$ via $f\circ \pi$, Proposition~\ref{disjJac}
implies that $g(\Cal{X})\leq g(\Cal{Y}/G)=g(\Cal{X}')$. Therefore, we conclude that $G=H$, i.e. $\Cal{X}=\Cal{Y}/G$.
This implies that for any element $\tau$ in $G_0 \setminus G$, $f(\tau(x))-f(x)$ is not constant for all $x\in \Cal{X}$.
Hence there are infinitely many $i$ such that $f(x_i)=D_i$ are not defined over $K$.

Hence there are infinitely many non-torsion points $D_i\in J(\Cal{X})(L_i) \setminus J(\Cal{X})(K)$.
Moreover, by Lemma~\ref{lma3}, there are infinitely many $i$ such that $\sig(D_i)-D_i$ are not torsion for all $\sig\in G_0$.  In order to show that they are linearly independent, if
\begin{equation}\label{dep}
n_1D_1+n_2D_2+\cdots n_kD_k=O, \text{for some integers } n_i,
\end{equation}
for each $i$, there exists $\sig_i\in \Gal(L_1L_2\cdots L_k/K)$ such that $\sig_i(D_i)\neq D_i$ but $\sig_i(D_j)=D_j$ for all $j\neq i$ by the linear disjointness of $L_i$. Then by applying $\sig_i$ to Eq. (\ref{dep}) and subtracting one from another, we get
$n_i(\sig(D_i)-D_i)=O,$
which leads a contradiction. Hence the rank of $J(\Cal{X})$ gains over each $L_i$.
\end{proof}

%
%
%

\begin{proof}[Proof of Proposition~\ref{thm4}]
 Every curve $\Cal{X}$ is bi-rationally equivalent to a plane curve, allowing
 for singularities. Irreducibility, however, is preserved.  Let $\Cal{C}$
 be such a plane curve, and suppose it is given by a homogeneous equation
 \[
  F(X_0:X_1:X_2)=0
 \]
where we define $n=\deg F$.  We now make a change of variables as follows.
Let $L/K$ be a finite extension such that $\Cal{C}$ has a non-singular
point $P_1$ defined over $L$.  Then choose a second point have $P_2$,
not lying on $\Cal{C}$ such that the directional derivative
\[
 \nabla_{\overrightarrow{P_1P_2}}F(P_1)
\]
does not vanish (i.e. $P_2$  does not lie on the tangent of $\Cal{C}$
at $P_1$). The points $P_1$ and $P_2$ together define a line in $\bb{P}^2$;
let $P_0$ be a point not on this line. Finally if $P_i=(a_{i0}:a_{i1}:a_{i2})$
are $X_0,X_1,X_2$-coordinates, then
\[
 X_i=a_{i0}U_0+a_{i1}U_1+a_{i2}U_2
\]
defines a change of variables, such that $F(U_0:U_1:U_2)$ has the following
properties:
\begin{enumerate}
 \item The coefficient of $U_1^n$-term vanishes,
 \item The coefficient of $U_1^{n-1}U_2$-term does not vanish.
\end{enumerate}
In the new coordinates, $F$ is defined over $L$, however there is enough
freedom to the choice of $P_2$ and $P_0$ so that if $a_{1j}\in L\rcomp K$,
then $a_{2j}$ and $a_{0j}$ can be chosen such that
\[
 \frac{a_{2j}}{a_{1j}},\frac{a_{0j}}{a_{1j}}\in K.
\]
We can then re-scale by the substitution $V_j=a_{1j}U_j$. On the
other hand if $a_{1j}\in K$ then we simply take $a_{2j},a_{0j}\in K$
and $V_j=U_j$. The combined effect is that we obtain a projective
linear transformation of $\bb{P}^2$ defined over $K$, such that
$F(V_0:V_1:V_2)$ has the same two properties as $F(U_0:U_1:U_2)$ above.
Dividing $F(V_0:V_1:V_2)$ by $V_0^n$ and making the substitution
\[
 v_1=\frac{V_1}{V_0}\quad\text{and}\quad v_2=\frac{V_2}{V_0}
\]
gives us an equation $f(v_1,v_2)=0$ defining $\Cal{C}$.  By construction
the polynomial $f$ is defined over $K$ and irreducible. We now construct
an auxiliary curve $\Cal{C}_1$ from $f$ by the substitution.
\[
 v_1=s+t^{k_1}\quad \text{and}\quad v_2=b+t^{k_2}
\]
for suitable $k_1,k_2\in\bb{Z}^+$ and $b\in K$.
The polynomial $g(s,t)$ defining $\Cal{C}_1$ must be designed to
be irreducible and have degree $d=k_1(n-1)+k_2$.  Since $f$ is irreducible,
by Hilbert's irreducibility theorem there exists $b\in K$ such that $f(v_1,b)$
is irreducible.  Under the substitution we have
\[
 f(s,b)=g(s,0),
\]
and so $g(s,t)$ must be irreducible with this choice of $b$.

To obtain the desired degree, we plot the exponents $(n_i,n_j)$
of all non-vanishing terms $c_{ij}v_i^{n_i}v_j^{n_j}$ of $f$ in a plane.
Now consider the region $R$ formed by the convex hull of these points.
The two properties of mentioned above imply that $(n-1,1)$
is a corner point of $R$, hence by the graphical method of solving
a linear programming problem
\[
k_1n_1+k_2n_2
\]
will attain its maximum value for the region $R$ uniquely
at the point $(n-1,1)$ if  the slope of the line
$k_1n_1+k_2n_2=0$  is less than $-1$.  Since the slope
is $-k_1/k_2$, this means we need $k_1> k_2$.

Now we need to determine what values of $d=k_1(n-1)+k_2$
are possible, with positive integers $k_1,k_2$ satisfying $k_1> k_2$.
If we increase $k_1$ by 1, the value of $d$ goes up $n-1$, and
this gap can be filled in by $k_2$ if and only if $k_2$ is allowed
to range over $n-1$ consecutive integers. This makes $n$ the minimum
value for $k_1$ and hence we obtain:
\[
 N=n(n-1)+1.
\]
Applying Proposition~\ref{thm3} to the curve $\Cal{C}_1$ where the covering
of $\bb{P}^1$ is obtained by taking the $s$-coordinate, completes the
proof.
\end{proof}
\begin{rmk}
 In some cases it is possible to get a better value for $N$.  For example,
 for certain curves we may be allowed to take $k_1=k_2$.  The maximum
 value may no longer be uniquely obtained at $(n-1,1)$, but if the cancellation
 of the highest degree terms of $f$ does not occur, this maximum may
 still be the degree. In that case we are allowed to take $N=(n-1)^2+1$.

 Also, if we are only interested in prime degree, and pay attention
 to the prime gaps, we can do better. For example, in the case of
 the corollary for elliptic curves, we have $n=3$ giving us $N=7$
 at least. But $5$ is also obtained by $k_1=2$ and $k_2=1$, and
 we can get $2$ and $3$ by applying Proposition~\ref{thm3} directly
 to the elliptic curve.
\end{rmk}
Next, we consider the special case of a degree $p$-Galois covering
that is ramified at 0, 1, and $\infty$.  By taking $p=7$, this includes
the Klein quartic as a special case.  There are a couple of interesting
things to note about this case.  The first is that when the construction
is done over $K/\bb{Q}(\zeta)$, where $\zeta$ is a primitive 7-th root of unity,
the extensions $L/K$ are cyclic Galois extensions.  The second is that by
Remark~\ref{Special}, for $K=\bb{Q}$
or $\bb{Q}(\zeta)$ the only $K$-rational points which don't lift to a
point lying in a degree 7 extension are 0, 1, and $\infty$.  For $K=\bb{Q}(\zeta)$
we use Hilbert's correction of Kummer's proof in the case of regular primes
(see \cite{Grosswald}).

\begin{proof}[Proof of Proposition \ref{thm1}]
First we investigate the situation with an automorphism of $\Cal{X}$
that cyclically permutes the points 0, 1, and $\infty$.
Given loops $\gamma_0,\gamma_1,\gamma_\infty$, about the points
0, 1, $\infty$ respectively each with the same winding number $Np$
for some positive integer $N$, these loops will lift to complete loops in $\Cal{X}$
with winding numbers $w_0$, $w_1$, $w_\infty$, which must satisfy
\begin{equation}\label{eq:1}
 w_0+w_1+w_\infty\equiv 0\mod p.
\end{equation}
Now suppose that $\Cal{X}$ has an automorphism that cyclically permutes
the points 0, 1, and infinity.  If we take $N$ to be a common multiple
of the minimal values for $w_0$, $w_1$, $w_\infty$, then the lifts
of $\gamma_0,\gamma_1,\gamma_\infty$ will remain complete
loops under the automorphism.  Furthermore, the automorphism
causes $w_0$, $w_1$, and $w_\infty$ to be multiples of each other
mod $p$, say
\[
w_1\equiv c_0 w_0 \bmod p,\quad
w_\infty \equiv c_1 w_1 \bmod p,\quad
w_0\equiv c_2w_\infty \bmod p
\]
for some integers $c_0,c_1,c_2$, and in fact it can be shown that $c_0$, $c_1$, and $c_2$ must be the same $\bmod\, p$,
hence \prettyref{eq:1} reduces to
\begin{equation}\label{eq:5}
  1+n+n^2\equiv 0\mod p
\end{equation}
for some positive integer $n$ which can be taken less than $p$. In terms of equations, this means
we have the affine model defined by
\begin{equation}\label{eq:2}
  y^p=x^n(x-1).
\end{equation}
For $p=7$ this agrees with \cite[equation 2.2]{Elkies} up to a change of variables.
From the quadratic reciprocity we see that \prettyref{eq:5} has a solution when $p=3$
or $p\equiv 1 \mod 6$. However, in the case $p=3$, equation \pref{eq:2} defines an elliptic
curve, and the quotient by the cyclic permutation gives us an isogeny, so in
this case we do not really get anything beyond what Proposition~\ref{thm3} already states.

Whenever a solution to \prettyref{eq:5} exists, we can show that $\Cal{X}$ has an
automorphism that cyclically permutes 0, 1, and $\infty$, in the following way.
We do this by considering an auxiliary curve $\Cal{Y}$, having a map to $\Cal{X}$,
and actually, for certain values of $p$, the curve $\Cal{Y}$ is actually a
non-singular model for $\Cal{X}$. In particular we define $\Cal{Y}$ as follows: for a given integer $m\geq 0$,
\begin{equation}\label{eq:6}
  X^mY+Y^mZ+Z^mX=0.
\end{equation}
It is non-singular and has an automorphism that cyclically permutes the points
\[
 (1:0:0),\, (0:1:0),\, (0:0:1).
\]
For convenience, let
$S:=\{(1:0:0),\, (0:1:0),\, (0:0:1)\}.$
The fixed points of this automorphism are
\begin{equation}\label{eq:7}
 (\rho:\rho^2:1)\quad\text{and}\quad(\rho^2:\rho:1)
\end{equation}
where $\rho$ is a primitive cube root of unity. These do not always lie
on the curve, and as we shall see this is the difference between the
cases $p=3$ and $p\equiv 1\mod 6$. Following Elkies \cite{Elkies},
we set up a map from $\Cal{Y}\rcomp S$ to the line $a+b+c=0$ in $\bb{P}^2$
in the obvious way:
\[
 \psi:(X:Y:Z)\mapsto (X^mY:Y^mZ:Z^mX).
\]
This map can be extended to all of $\Cal{Y}$ by defining
\begin{align*}
 \vphi:(1:0:0)\mapsto (1:0:-1),\\
 \vphi:(0:1:0)\mapsto (-1:1:0),\\
 \vphi:(0:0:1)\mapsto (0:-1:1),
\end{align*}
and it is easy to check that when $\psi$ is extended in this way, it remains continuous.
If $(a:b:c)$ is a point in the image of this map $\vphi$, it is easy to verify that
\[
  \paren{\frac{Y}{Z}}^{m^2-m+1}=\frac{ab^{m-1}}{c^m}.
\]
By making the substitution $u=-\frac{b}{c}$ and $v=(-1)^{m-1}Y/Z$, this give us
\begin{equation}
v^{m^2-m+1}=u^{m-1}(u-1).
\end{equation}
Equation~\prettyref{eq:2} can be obtained from this by the additional substitutions
$x=u$, $y=v^{(m^2-m+1)/p}$ and $n=m-1$, which essentially amounts to taking
a quotient.

Now it be calculated that the automorphism of $\Cal{Y}$
that cyclically permutes the variables $X,Y,Z$ passes to an automorphism of $\Cal{X}$
permuting $0,1,\infty$, and when $p\equiv 1\mod 6$ the two fixed points in~\prettyref{eq:7}
give us two fixed points in the model \prettyref{eq:2} with $x$ and $y$ coordinates
in the form $\pm \rho^i$ for $i=1,2$. For $p=3$, they do not lie on the curve. Hence by applying the
Riemann-Hurwitz formula with this ramification data, we find that the genus of the quotient is
\[
 g=\begin{cases}
    \frac{p-1}{6}&\text{if }p\equiv1\mod 6\\
    1&\text{if }p=3.
   \end{cases}
\]
Any $K$-rational point on $\Cal{X}$ maps to a $K$-rational point on the quotient,
so the rest of the argument is the same as the proof of Proposition~\ref{thm3}.
\end{proof}

To prove Theorem \ref{thm2} we need the concept of a generic polynomial.
The book \cite{Ledet} by Jensen, Ledet, and Yui  gives a good introduction to the
subject. In particular, we will use following lemma to construct
a generic polynomial for a cyclic  degree 3-extension of $\bb{Q}$.

\begin{lma}\label{lma2}
 Let $f$ be a cubic polynomial that is irreducible over a number field $K$, and let $d(f)$ be
its discriminant. Then
 \[
  \Gal(f/K)\simeq\begin{cases}
                  S_3&\text{if }d(f)\notin (K^\times)^2,\\
                  C_3&\text{if }d(f)\in (K^\times)^2,
                 \end{cases}
 \]
 where $S_3$ is the symmetric group on $3$ letters and $C_3$ is the cyclic group of order $3$.
\end{lma}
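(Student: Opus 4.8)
The plan is to combine the classification of the transitive subgroups of $S_3$ with the behavior of the square root of the discriminant under the Galois action. Since $K$ is a number field it has characteristic $0$, so the irreducible polynomial $f$ is separable; I would write $\alpha_1,\alpha_2,\alpha_3$ for its three distinct roots in a splitting field and set $G=\Gal(f/K)$, viewed as a subgroup of $S_3$ through its permutation action on these roots.

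First I would note that because $f$ is irreducible, $G$ acts transitively on $\{\alpha_1,\alpha_2,\alpha_3\}$. The only transitive subgroups of $S_3$ are $A_3\cong C_3$ and $S_3$ itself (the order-$2$ subgroups each fix a root, and the trivial subgroup is not transitive). Hence $G$ is one of these two groups, and the entire problem reduces to deciding which one, using $d(f)$.

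Next I would introduce the element $\delta=\prod_{i<j}(\alpha_i-\alpha_j)$, so that $\delta^2=d(f)$. A direct check shows that each $\sig\in S_3$ sends $\delta$ to $\operatorname{sgn}(\sig)\,\delta$, so $\delta$ is fixed by exactly the even permutations, i.e. by $A_3$. Because $f$ is separable the roots are distinct and $\delta\neq 0$, so $\delta$ is a genuine square root of $d(f)\neq 0$.

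The heart of the argument is then the Galois correspondence: $\delta\in K$ if and only if $\delta$ is fixed by all of $G$. On one hand, $\delta\in K$ is equivalent to $d(f)=\delta^2$ being a square in $K^\times$ (if $d(f)=c^2$ with $c\in K^\times$ then $\delta=\pm c\in K$, and conversely). On the other hand, $G$ fixes $\delta$ if and only if $G\subseteq A_3$. Combining these equivalences with the dichotomy from the first step yields the claim: if $d(f)\in(K^\times)^2$ then $G\subseteq A_3$ and, being transitive, $G=C_3$; if $d(f)\notin(K^\times)^2$ then $G\not\subseteq A_3$, forcing $G=S_3$. The argument is essentially routine, and the only points demanding care are verifying that the transitive subgroups of $S_3$ are precisely $A_3$ and $S_3$ (so that the discriminant cleanly separates the two cases) and confirming $\delta\neq 0$, which is where separability, and hence the characteristic-zero hypothesis on the number field $K$, enters.
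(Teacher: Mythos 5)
Your proof is correct, and it is the standard argument. Note that the paper does not prove this lemma at all: it is quoted as a classical fact (with the book of Jensen--Ledet--Yui cited nearby as background), so there is no paper proof to compare against; your write-up simply supplies the textbook argument via the sign action on $\delta=\prod_{i<j}(\alpha_i-\alpha_j)$ and the Galois correspondence. One pedantic point: for a non-monic cubic the discriminant is $a^4\prod_{i<j}(\alpha_i-\alpha_j)^2$ rather than $\delta^2$, but since these differ by the square $a^4$ the condition $d(f)\in(K^\times)^2$ is unaffected and your argument goes through verbatim after normalizing $f$ to be monic.
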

The point is that if we allow the coefficients of $f$ to have parameters,
and we use Hilbert's irreducibility theorem to obtain values of those parameters
such that $f$ remains irreducible over the ground field, then this theorem allows the
Galois group to be determined. The conditions for this turn out to be
much weaker than those for a covering of $\bb{P}^1$ to be Galois,
and this is what enables us to prove Theorem~\ref{thm2}.

\begin{proof}[Proof of Theorem \ref{thm2}]
Consider the polynomial
\begin{equation}\label{eq:4}
 f(x,t)=x^3+(a_4-a_1t)x+(a_6-a_3t-t^2)
\end{equation}
where $a_4,a_1,a_6,a_3\in\bb{Q}$ are to be determined.
For convenience, we assume that
\[
a_3=a_1\frac{a_6}{a_4}-\frac{a_4}{a_1},
\]
so that $a_6-a_3t-t^2$ contains $a_4-a_1t$ as a factor.
Then the discriminant is
\begin{equation}\label{eq:3}
  d(f)=\paren{-4(a_4-a_1 t)-27\paren{\frac{a_6}{a_4}+\frac{t}{a_1}}^2}(a_4-a_1t)^2
\end{equation}
Our strategy is to use the Diophantine methods to find the values of $t$ making
$d(f)$ a square in $\bb{Q}$.  While $f(x,t)$ gives us a covering, it does not give us a generic
polynomial with $C_3$ as the Galois group.  Effectively the Diophantine approach
replaces $t$ with an auxiliary variable $s$ such that $f(x,s)$ becomes
a generic polynomial with $C_3$ as the Galois group.

By completing the square, the first factor in equation \pref{eq:3} satisfies that
\begin{equation}
\begin{split}
 \notag -4(a_4-a_1 t)&-27\Big(\frac{a_6}{a_4}+\frac{t}{a_1}\Big)^2 \\
&=4\paren{\frac{a_1^4}{27}-a_1^2\frac{a_6}{a_4}-a_4}-27\paren{\frac{t}{a_1}+\frac{a_6}{a_4}-\frac{2 a_1^2}{27}}^2.
\end{split}
\end{equation}
If we set the first term $4\paren{\frac{a_1^4}{27}-a_1^2\frac{a_6}{a_4}-a_4}$ equal to 1, this reduces the number of free variables by
1 giving us
\[
 a_6=\frac{a_4 a_1^2}{27}-\frac{a_4}{4a_1^2}-\frac{a_4^2}{a_1^2}.
\]
The Diophantine equation
\[
 1=u^2+3v^2
\]
can be solved in the same way as the Pythagorean case.  In
particular we have
\[
 u=\frac{1-3s^2}{1+3s^2}\quad\text{and}\quad
 v=3\paren{\frac{t}{a_1}+\frac{a_6}{a_4}-\frac{2 a_1^2}{27}}=\frac{2s}{1+3s^2}.
\]
This parameterization allows $t$ to be replaced by a rational expression in
$s$ such that the discriminant of $f(x,s)$ is
\[
 d(f)=u^2(a_4-a_1t)^2.
\]
Applying Hilbert's irreducibility theorem gives us infinitely many values $s\in\bb{Q}$ such
that $f(x,s)$ is irreducible over $\bb{Q}$, and by lemma \ref{lma2}, the roots of any such polynomial
will generate a cyclic degree 3 extension of $\bb{Q}$.  In terms of the
elliptic curve $E$ defined by $f(x,t)=0$, where $f(x,t)$ is given in~\pref{eq:4}, this means that
the specific $t$ values we get from rational $s$ values give us points $(x,t)$ on $E$ where the $x$-coordinate
lies in a cyclic degree 3-extension $L/\bb{Q}$. Since changing the value of $s$ gives us infinitely
many different values of $t$, we obtain infinitely
many cyclic degree 3-extensions $L/\bb{Q}$ over which $E$ gains rank. The linear
disjointness also follows in the same way as in the proof of Lemma~\ref{lma1}.

As for which elliptic curves this occurs for, it would seem that the variables $a_1$
and $a_4$ remain undetermined.  However, when computing the $j$-invariant, the
variable $a_4$ cancels out and we are left with an expression depending only on $a_1$,
namely
\begin{equation}\label{j}
 j=256\frac{(a_1^4 + 54)^3a_1^4}{(4a_1^4 - 27)^3}.
 \end{equation}
\end{proof}

\begin{rmk}\label{eta} Recall that the function $f$ defined by 
$$f(z)=\Big(\frac{\eta(z)}{\eta(3z)}\Big)^2=q^{-1}+15+54q-76q^2-243 q^3+1188 q^4-\cdots$$
is a modular function on $\Gamma_0(3)$. And the $j$-invariant given in $(\ref{j})$ can be paramatrized as follows:
Letting $a_1^4=\frac{f}{4}$, $$j=\frac{f(f+216)^3}{(f-27)^3}.$$
We deeply thank Professor Masanobu Kaneko for letting us this observation.
\end{rmk}


\end{document}